\newtheorem{theorem}{Theorem}[section]
\newtheorem*{theorem*}{Theorem}
\newtheorem{lemma}[theorem]{Lemma}
\newtheorem{corollary}[theorem]{Corollary}
\theoremstyle{definition}
\theoremstyle{remark}
\numberwithin{equation}{section}
\newcommand{\Z}{{\mathbb Z}}
\begin{document}

\title[The reciprocal sum of pnds]{The reciprocal sum of primitive nondeficient numbers}

\author{Jared Duker Lichtman}
\address{Department of Mathematics, Dartmouth College, Hanover, NH 03755}

\email{jdl.18@dartmouth.edu}


\subjclass[2010]{Primary 11Y60; Secondary 11N25}

\date{February 12, 2018.}


\keywords{primitive nondeficient numbers, primitive abundant numbers, density of abundant numbers, smooth numbers, reciprocal sum}

\begin{abstract}
We investigate the reciprocal sum of {\it primitive nondeficient numbers}, or pnds. In 1934, Erd\H{o}s showed that the reciprocal sum of pnds converges, which he used to prove that the abundant numbers have a natural density. We show the reciprocal sum of pnds is between 0.348 and 0.380.
\end{abstract}

\maketitle


\section{Introduction}

The field of probabilisitic number theory got its start in the 1920s and 1930s with work of Schoenberg \cite{schoenb}, Davenport \cite{davenp}, and others who proved the existence of distribution functions for $\varphi(n)/n$, $\sigma(n)/n$, and similar functions.  (Here, $\varphi$ is Euler's function and $\sigma$ is the sum-of-divisors function.)  This line of work led up to the celebrated theorems of Erd\H os--Wintner and Erd\H os--Kac. The present paper is concerned with one of the earlier results in this history, namely Erd\H os \cite{erdos1}, where an elementary argument is
presented to show that the density of the set of $n$ with $\sigma(n)/n\ge 2$ exists.

With nomenclature going back to ancient times, a number $n$ is said to be abundant, deficient, or perfect if $\sigma(n)/n$ is greater than, less than, or equal to 2, respectively. One then says $n$ is nondeficient if $\sigma(n)/n\ge2$. For such $n$, since $\sigma(n)/n = \sum_{d\mid n}1/d$, all multiples of $n$ are also nondeficient. This naturally leads one to consider nondeficient numbers all of whose proper divisors are deficient, so-called {\it primitive nondeficient} (pnd) numbers. The sequence of pnds is OEIS A281505.

It is easy to see that every nondeficient number has a pnd divisor. The proof of Erd\H os \cite{erdos1} then hinged on showing that the reciprocal sum of the pnds is convergent. The convergence was shown by determining a sufficiently small upper bound on the counting function for pnds.  Denoting the number of pnds $\leq x$ by $N(x)$, the paper showed that
\[N(x)=o\left(\frac{x}{(\log x)^2}\right),\]
which is enough to prove that the sum of reciprocals of the pnds converges.  A more detailed study by Erd\H os in \cite{erdos2} found that, for sufficiently large $x$,
\[x\exp(-c_1\sqrt{\log x\log\log x})\leq N(x)\leq x\exp(-c_2\sqrt{\log x\log\log x}), \]
where $c_1=8$ and $c_2=1/25$.  Presumably there is a constant $c$ such that $c_1,c_2=c+o(1)$
as $x\to\infty$.  Recent numerical experiments
of Silva \cite{silva} suggest such a $c$ may be close to 1.
The best that is now known asymptotically is a result of Avidon \cite{avidon} who showed we may take $c_1=\sqrt 2+\epsilon$ and $c_2=1-\epsilon$ for any fixed $\epsilon>0$.

Once a series is found to converge, it is natural to wonder what its value may be. For example, by Brun's Theorem it is known that the reciprocal sum of twin primes converges.  This sum, called Brun's constant, is approximately $1.902160583104$, which is found by extrapolating
via the Hardy--Littlewood heuristics.  However, the best proven upper bound is $2.347$, see \cite{cp, klyve}.  Similarly, Pomerance \cite{pomerance} proved that the reciprocal sum of numbers in amicable pairs converges, and work has also been done to determine bounds on this value, the Pomerance constant, the current bounds being $0.0119841556$ and $222$, see \cite{bk, NP}. Given the existing nomenclature, we call the value of the reciprocal sum of pnds the \emph{Erd\H os constant}. 

The principal result of this paper is the following theorem.

\begin{theorem} \label{thm:final}
The Erd\H{o}s constant $\sum_{n\textnormal{ is a pnd}}1/n$ lies in the interval
\begin{align*}
(0.34842 \ , \ 0.37937).
\end{align*}
\end{theorem}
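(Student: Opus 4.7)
The plan is to obtain each bound by splitting the sum at a computational threshold $B$:
\begin{equation*}
\sum_{n \text{ pnd}} \frac{1}{n} \;=\; S(B) + T(B), \qquad S(B) = \sum_{\substack{n\le B\\ n \text{ pnd}}} \frac1n, \qquad T(B) = \sum_{\substack{n>B\\ n \text{ pnd}}} \frac1n,
\end{equation*}
handling $S(B)$ computationally and bounding $T(B)$ analytically.

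\textbf{Lower bound.} Since $T(B) \ge 0$, the partial sum $S(B)$ is automatically a lower bound for the Erd\H{o}s constant. I would enumerate pnds by a depth-first recursion: grow each deficient number $m$ by appending the smallest prime power $p^e$ with $p > P(m)$ such that $mp^e$ becomes nondeficient, recording the resulting $mp^e$ as a pnd. Every pnd arises this way, and the fast decay of $N(x)$ makes this feasible for rather large $B$ (presumably leveraging Silva's tabulations). One then reads off $S(B) \ge 0.34842$ once $B$ is sufficiently large.

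\textbf{Upper bound.} The task is to show $T(B) \le 0.37937 - S(B) \approx 0.031$. My approach would be to use the smooth-rough decomposition. Fix a parameter $y$ and write each pnd as $n = sr$ with $s$ the $y$-smooth and $r$ the $y$-rough part, so $\gcd(s,r)=1$. If $r>1$ then $s$ is a proper divisor of the pnd $n$, hence deficient, while $\sigma(s)/s\cdot \sigma(r)/r = \sigma(n)/n\ge 2$. This yields
\begin{equation*}
\frac{\sigma(s)}{s} < 2 \qquad \text{and} \qquad \frac{\sigma(r)}{r} \;\ge\; \rho(s) := \frac{2s}{\sigma(s)} \;>\; 1,
\end{equation*}
a quantitative excess the rough part must supply. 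For each deficient $y$-smooth $s$, bound the contribution $\sum 1/r$ over $y$-rough $r$ with $r > B/s$ and $\sigma(r)/r \ge \rho(s)$; a Rankin-type inequality
\begin{equation*}
\sum_{r \text{ admissible}} \frac1r \;\le\; \rho(s)^{-\lambda}\sum_{\substack{r \text{ $y$-rough}\\ r>B/s}} \frac{1}{r}\Bigl(\frac{\sigma(r)}{r}\Bigr)^{\lambda}
\end{equation*}
with $\lambda>0$ reduces the right-hand side to Euler-product manipulations combined with the size constraint $r > B/s$ (the latter being essential, since the unconstrained Euler product diverges). Pnds with $r=1$ are fully $y$-smooth and very sparse, so direct enumeration handles them. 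Summing over the finitely many deficient $y$-smooth $s$ whose extensions can exceed $B$ produces the bound on $T(B)$.

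The main obstacle is the joint optimization of $B$, $y$, and $\lambda$. Taking $y$ small keeps the enumeration of deficient $y$-smooth $s$ tractable but makes each excess $\rho(s)-1$ tiny, weakening the Rankin bound; taking $y$ large strengthens $\rho(s)$ but explodes the number of $s$ to consider. Likewise $B$ must be pushed as far as computational resources allow, since $S(B)$ needs to clear $0.34842$ while the condition $r > B/s$ is what tames the rough-part sum. Threading this needle to squeeze $T(B)$ below roughly $0.031$ appears to be the crux of the argument.
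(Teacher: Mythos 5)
Your lower bound reasoning is correct and matches the paper's: the partial sum up to $10^{14}$ (from Silva) already gives $0.34842\ldots$, so any truncation is a valid lower bound. The upper bound, however, is where the proposal falls short of what is actually needed, and in a way the paper itself anticipates.

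The central obstacle you've not accounted for is quantitative. The paper explicitly notes that a smooth/rough decomposition starting from around $10^{14}$ — which is roughly as far as Silva's computation reaches, and hence roughly as far as your $B$ can go — gives a tail bound on the order of $300$, and that even with carefully tuned smoothness cutoffs the best such methods yield is ``just over 300.'' Your $\rho(s)$-constrained Rankin variant does record the information that the rough part must supply the deficiency excess $\sigma(r)/r \ge 2s/\sigma(s)$, but this constraint degenerates: for most $y$-smooth deficient $s$, $\rho(s)$ is barely above $1$, so the prefactor $\rho(s)^{-\lambda}$ gives almost nothing, and the weight $(\sigma(r)/r)^\lambda$ is bounded and does not help the sum $\sum_{r>B/s} 1/r$ converge. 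You would additionally need a second Rankin shift in $r$ to make the sum finite, and at that point you are essentially reimplementing the Erd\H{o}s counting bound (Lemma~2.2 and Theorem~2.3 in the paper), which the paper already shows is insufficient on its own from $10^{14}$. There is no parameter choice $(B,y,\lambda)$ visible in your scheme that closes a four-order-of-magnitude gap.

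The genuinely new idea in the paper — which your proposal does not contain — is to invoke the density $\Delta$ of abundant numbers. Since every abundant number has a unique smallest pnd divisor $n$, one has $\Delta = \sum_n' a_n$, and the key inequality $a_n \ge \frac1n\prod_{p\le\sqrt{2n}}(1-1/p) \gg \frac{1}{n\log n}$ (valid because the largest prime of a pnd $n$ is at most $\sqrt{2n}$) converts high-precision knowledge of $\Delta$, together with a computed partial sum of a related series $\sum' b_n$ up to $4\cdot 10^{10}$, into a bound of roughly $3.8\cdot 10^{-5}\log(2x)$ on $\sum_{4\cdot 10^{10} < n \le x}' 1/n$. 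This single step controls the entire range up to $e^{700}$ — by far the dominant part of the tail — and is what cuts the upper bound from $\sim 300$ down to $\sim 0.38$. Only for $n > e^{700}$ does the paper revert to smooth/rough estimates of the kind you propose, and even there it combines a Rankin bound for the smooth part with a refined Erd\H{o}s-style counting argument for primitive nondeficient numbers with large $P(n)$, which is structurally different from your $\rho(s)$ excess idea. Without the abundant density input, the tail cannot be brought anywhere near $0.031$.
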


The lower bound was obtained by direct calculation of the partial sum over pnds up to $10^{14}$, carried out by Silva \cite{silva}. For the upper bound, our starting point is an estimate of Erd\H os \cite{erdos2} on the distribution of pnds. We make such estimates explicit, and along the way we sharpen the original argument by leveraging properties of primitive sets, that is, sets of numbers in which none divides any other. 

We note that a simple-minded application of pnd distribution estimates yields an upper bound greater than 2000. With strategic choices of a smoothness cutoff one may reduce the bound to just over 300. In the author's thesis \cite{JDLt}, the upper bound was reduced to under 19 by drawing on {\it upper bound} methods in \cite{deleglise} for the density of abundant numbers, often denoted by $\Delta$. The new, and perhaps surprising, idea to this paper is to replace the latter argument by those inspired by {\it lower bound} methods for $\Delta$. From \cite{mits2}, $\Delta$ may be expressed as a convergent series over the pnds, which turns out to have a natural relation with the desired pnd reciprocal sum in an interval. This approach leverages knowledge of $\Delta$ to high precision. Specifically, the abundant density is tightly bounded by
$$0.2476171 < \Delta < 0.2476475,$$
as shown in \cite{mits1}.
Incorporating these new ideas cuts the final upper bound down to under $0.38$.

For $\alpha \ge 1$, the set of numbers $n$ with $\sigma(n)/n > \alpha$, the so-called $\alpha$-abundants, has a positive density $\Delta_\alpha$. The $\alpha$-perfects and $\alpha$-deficients are defined similarly. So, an $\alpha$-pnd is an $\alpha$-nondeficient number all of whose divisors are $\alpha$-deficient. The abundant density method in \cite{mits1} was shown to generalize to $\alpha$-abundants. For the pnd distribution estimates, by Lemma 2.1 in \cite{mitspaul} there is a positive constant $c_\alpha$ so that, for sufficiently large $x$, the number of $\alpha$-pnds up to $x$ is at most
$$x/\exp(c_\alpha \sqrt{\log x \log_2 x})$$
if $\alpha$ is not a Liouville number. In particular, by partial summation this implies that the reciprocal sum of $\alpha$-pnds converges for any non-Liouville $\alpha$. On the other hand, Erd\H os \cite{erdos3} constructed a family of Liouville numbers $\alpha$ whose $\alpha$-pnd reciprocal sum diverges. In line with these results, the methods in this paper naturally generalize to the reciprocal sum of $\alpha$-pnds in the non-Liouville case.

\subsection*{Notation}
We use $p$ and $q$ to denote prime numbers. We write $(n,m)$ to denote the greatest common divisor of $n$ and $m$. Let $h(n) = \sigma(n)/n$. For positive integers $n$, we let $P(n), \omega(n)$ denote the largest prime factor and the number of distinct prime factors of $n$, respectively. By convention $P(1)=1$. We say a positive integer $n$ is $y$-smooth if $P(n)\le y$. We say $n$ is square-full if $p^2\mid n$ for all primes $p\mid n$. Let $\gamma$ denote the Euler-Mascheroni constant. Let $\log_k n=\log(\log_{k-1} n)$ denote the $k$-fold logarithm. Let $\textrm{Li}(x) = \int_2^x\frac{dt}{\log t}$. In many instances, we take a sum over certain subsets of pnds, in
which cases we use $\sum^{'}_{n}$ to denote $\sum_{n\textrm{ is a pnd}}$.

\section{Setting up the bound}
To bound the reciprocal sum of pnds, we may first compute the sum directly up to some convienient $x_0\in\Z$, so that
\begin{align}
\varepsilon = \sideset{}{'}\sum_{n}\frac{1}{n} & = \sideset{}{'}\sum_{n\le x_0}\frac{1}{n} + \sideset{}{'}\sum_{n> x_0}\frac{1}{n}.
\end{align}
Silva \cite{silva} computed that for $x_0=10^{14}$,
\begin{align}\label{Silva}
\sideset{}{'}\sum_{n\le 10^{14}}\frac{1}{n} = 0.34842\ldots,
\end{align}
as well as that the number of pnds up to $10^{14}$ is 870510225.\footnote{Silva \cite{silva} provided the first 80 digits of \eqref{Silva} to be\\ 0.3484218159391501691221675470639682348139\ldots}
Before moving on, we note that we expect this lower bound to well-approximate the entire sum. Indeed, if one approximates the tail by $\exp(-\sqrt{\log x\log\log x})$, roughly the asymptotic upper bound, partial summation gives
\begin{align*}
\sideset{}{'}\sum_{n> 10^{12}}\frac{1}{n} \approx \int_{10^{14}}^\infty\frac{dx}{x}\exp(-\sqrt{\log x\log\log x}) = \int_{14\log10}^\infty e^{-\sqrt{t\log t}}\;dt < 1.3\cdot10^{-4},
\end{align*}
which suggests that the true value of the Erd\H{o}s constant is approximately $0.3485$.

For the remaining part of the series, we shall split up by $y$-smoothness, for $y$ to be determined. For the $y$-smooth contribution, we will adapt an elementary method pioneered by Rankin \cite{rankin}. See \cite{JDLCP2} for a more detailed study of explicit estimates for smooth numbers. For the non-$y$-smooth contribution, we first study the related sum
\begin{align*}
M(x,y) & = \sideset{}{'}\sum_{\substack{n\le x\\ P(n)> y}}1.
\end{align*}

\subsection{An upper bound for $M(x,y)$}
Every integer $n$ may be decomposed uniquely into $n=qs$, where $s=s(n)$ is square-full, $q=q(n)$ is square-free, and $(s,q)=1$. We first prove a preliminary lemma for pnds with large square-full part. 
\begin{lemma} \label{lmsqfull}
Let $\lambda=\zeta(3/2)/2\zeta(3)$. For $x>y>8$, we have
\begin{align*}
B(x,y):=\sideset{}{'}\sum_{\substack{n\le x\\s(n)\ge y}}1 \le \lambda xy^{-1/2}+2xy^{-2/3}.
\end{align*}
\end{lemma}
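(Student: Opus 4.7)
The plan is to exploit the primitivity condition via an antichain argument---which gives a key factor-of-two improvement over the naive multiplicity bound---and then estimate a tail sum of reciprocals of square-full integers.

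For each square-full $s \ge y$, let $\mathcal{A}_s = \{q : qs \textrm{ is a pnd with } s(qs) = s \textrm{ and } qs \le x\}$; each $q \in \mathcal{A}_s$ is square-free, coprime to $s$, with $q \le x/s$. Crucially $\mathcal{A}_s$ is an antichain under divisibility: if $q_1 \mid q_2$ for distinct $q_1, q_2 \in \mathcal{A}_s$, then $q_1 s$ is a proper divisor of the pnd $q_2 s$ and would have to be deficient, contradicting that $q_1 s$ is itself a pnd. Since any divisibility antichain in $\{1,\dots,N\}$ has size at most $\lceil N/2 \rceil$ (partition by odd parts), this gives
\begin{align*}
B(x,y) \le \sum_{\substack{y \le s \le x \\ s\,\textrm{sq-full}}} \left(\frac{x}{2s} + \frac{1}{2}\right) \le \frac{x}{2} \sum_{\substack{s \ge y \\ s\,\textrm{sq-full}}} \frac{1}{s} + \frac{1}{2}\#\{s\,\textrm{sq-full}: y \le s \le x\}.
\end{align*}

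To estimate the main sum, I would use the unique decomposition $s = a^2 b^3$ with $b$ square-free, rewriting the tail as $\sum_{b\,\textrm{sf}} b^{-3} \sum_{a \ge A_b} a^{-2}$ with $A_b = \max\{1, \lceil\sqrt{y/b^3}\rceil\}$, and splitting at $b = y^{1/3}$. The part $b \ge y^{1/3}$ uses $\sum_{a \ge 1} a^{-2} = \zeta(2)$ with a tail bound on $\sum 1/b^3$, contributing $O(y^{-2/3})$. For $b < y^{1/3}$, applying $\sum_{a\ge A} a^{-2} \le 1/A + 1/A^2$ together with $A_b \ge \sqrt{y/b^3}$ and $\sum_{b\,\textrm{sf}} b^{-3/2} = \zeta(3/2)/\zeta(3) = 2\lambda$ yields $2\lambda\, y^{-1/2} + O(y^{-2/3})$. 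Multiplication by $x/2$ recovers the claimed main term $\lambda x y^{-1/2}$.

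The antichain-rounding error $\frac{1}{2}\#\{s\,\textrm{sq-full} \le x\}$ is $O(\sqrt{x})$ by Bateman--Grosswald; for $y \ll x^{3/4}$ this is dominated by $xy^{-2/3}$ and fits inside the $2xy^{-2/3}$ error, while for $y$ comparable to $x$ a trivial estimate $B(x,y) \le \#\{\textrm{pnds} \le x\}$ suffices and is far smaller than the target. The main obstacle is carefully tracking all the error contributions---the tail from $b \ge y^{1/3}$, the $1/A^2$ term from the bound on $\sum 1/a^2$, and the antichain rounding---so that together they fit within $2xy^{-2/3}$; this relies on explicit constants for the square-full counting function and the hypothesis $y > 8$ to control small-$b$ boundary effects.
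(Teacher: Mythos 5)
Your proposal shares the paper's central idea exactly: for each square-full $s$, the set of cofactors $q$ with $qs$ a pnd is a primitive set (a divisibility antichain) inside $[1,\lfloor x/s\rfloor]$, and hence has at most $\lceil \lfloor x/s\rfloor/2\rceil \le x/(2s)+1$ elements. This antichain observation, which gives the factor-of-two savings, is the heart of both arguments. Where you genuinely diverge from the paper is in estimating $\sum_{s>y,\,s\,\square\text{-full}}1/s$. The paper cites Golomb's explicit bound $-3\sqrt[3]{t}\le K(t)-2\lambda\sqrt t\le 0$ for the square-full counting function $K(t)$ and finishes by partial summation; you instead propose the factorization $s=a^2b^3$ with $b$ square-free, splitting at $b=y^{1/3}$ and using the Euler product $\sum_{b\,\text{sf}}b^{-3/2}=\zeta(3/2)/\zeta(3)=2\lambda$. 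Both yield the same leading constant. The paper's route is shorter once Golomb is invoked; yours is self-contained but, as you acknowledge, requires tracking several error contributions (the $1/A^2$ term, the $b\ge y^{1/3}$ tail, and the rounding) to land within $2xy^{-2/3}$.

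One concrete misstep: you write that for $y$ comparable to $x$, ``a trivial estimate $B(x,y)\le\#\{\text{pnds}\le x\}$ suffices and is far smaller than the target.'' That is false. The number of pnds up to $x$ is of size $x/\exp\bigl(\Theta(\sqrt{\log x\log\log x})\bigr)$ (see the paper's introduction), which is far \emph{larger} than the target $\lambda xy^{-1/2}+2xy^{-2/3}\approx\lambda\sqrt x+2x^{1/3}$ when $y\asymp x$. For large $y$ you should instead bound $B(x,y)$ by the number of square-full integers in $[y,x]$ times the per-$s$ count, using $K(x)-K(y)\le 2\lambda(\sqrt x-\sqrt y)+3y^{1/3}$, which does become small as $y\to x$. (It is worth noting that the paper's own final absorption step, $2\lambda x^{1/2}\le\tfrac12 xy^{-2/3}$, also implicitly restricts the range of $y$ relative to $x$; in the applications $y$ is always a small power of $x$, so this is harmless there.)
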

\begin{proof}
Denoting $K(y)$ as the number of square-full integers up to $y$, by (8) in \cite{golomb} we have
\begin{align}
-3\sqrt[3]{y} \le K(y) - 2\lambda\sqrt{y} \le 0,\qquad \lambda=\frac{\zeta(3/2)}{2\zeta(3)}.
\end{align}
For each square-full number $s\in [y, x]$, the set $\{m\in[1,x/s]: ms\textrm{ is a pnd}\}$ is primitive so it contains at most $x/2s+1$ elements. Then by partial summation,
\begin{align*}
    \sideset{}{'}\sum_{\substack{n\le x\\s(n)\ge y}}1 & \le \sum_{\substack{y<s\le x\\s\;\square\text{-full}}}\Big(\frac{x}{2s}+1\Big) \le K(x) + \frac{x}{2}\sum_{\substack{s>y\\s\;\square\text{-full}}}\frac{1}{s}\\
    & \le \frac{x}{2}\Big(-\frac{K(y)}{y} + \int_{y}^\infty \frac{K(t)}{t^2}\;dt \Big) + 2\lambda x^{1/2}\\
    & \le \frac{x}{2}\Big(-\frac{2\lambda y^{1/2}-3y^{1/3}}{y} + 2\lambda\int_{y}^\infty \frac{dt}{t^{3/2}}\Big) + 2\lambda x^{1/2}\\
    &= \frac{x}{2}\Big(-2\lambda y^{-1/2}+3xy^{-2/3} + 2\lambda\Big[-2t^{-1/2}\Big]_{y}^\infty\Big) + 2\lambda x^{1/2}\\
    & = \lambda xy^{-1/2}+3/2xy^{-2/3}+2\lambda x^{1/2}  \le \lambda xy^{-1/2}+2xy^{-2/3}.
\end{align*}
which completes the proof.
\end{proof}

Now to bound $M(x,y)$, we roughly follow the developments in \cite{erdos2}, and split up into two cases. In the first case, suppose $s(n)\ge y^a$ for some parameter $a\in (0,\tfrac{1}{3})$ to be determined later. Then by Lemma \ref{lmsqfull}, we have
\begin{align} \label{eq:sqfull}
\sideset{}{'}\sum_{\substack{n\le x\\ P(n) > y\\s(n)\ge y^a}}1 \le \sum_{\substack{n\le x\\ s(n)\ge y^a}}1 = B(x,y^a) \le \lambda xy^{-a/2}+2xy^{-2a/3}.
\end{align}

In the second case, we have $s(n)< y^{a}$. We prove a lemma, adapted from \cite{erdos2}.

\begin{lemma}\label{lmErdos}
Assume $x>y>8$. Let $\{n_1,\ldots,n_m\}$ be the set of pnds $n_i\le x$ with $P(n_i)>y$ and square-full part $s(n_i)< y^a$. Let $b=3a/2$ and suppose that $b$ satisfies
\begin{align} \label{eq:dagger}
b\in\big(0,\tfrac{1}{2}\big),\qquad 2 \ge \Big(2-y^{-b}\Big)\Big(1+\sqrt{2/y}\Big)^{2\log x/\log(y/2)}.
\end{align}
Then for each $n_i$ there exists a square-free divisor $d_i\mid n_i$ with $d_i\in [y^{b/3}, \frac1{\sqrt2}y^{1/2}]$. Moreover, $\{n_1/d_1,\ldots,n_m/d_m\}$ is a set of $m$ distinct integers at most $x y^{-b/3}$, and therefore 
\begin{align}
\sideset{}{'}\sum_{\substack{n\le x\\P(n)>y\\s(n) < y^a}}1 = m\le x y^{-b/3}.
\end{align}
\end{lemma}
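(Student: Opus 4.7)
The plan is to adapt the argument of Erd\H{o}s \cite{erdos2} with explicit bookkeeping. The proof splits into (i) a structural observation about each pnd $n_i$, (ii) existence of the divisor $d_i$ via a multiplicative gap argument, and (iii) distinctness of the quotients $n_i/d_i$.

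For the \emph{preliminary structural fact}, since $P = P(n_i) > y$ and $s(n_i) < y^a$ with $a < 1/3$, we have $P^2 > y^2 > y^a > s(n_i)$, so $P^2 \nmid n_i$, i.e., $P \| n_i$. I would then argue the \emph{existence of $d_i$} by contradiction: suppose $n_i$ has no squarefree divisor in $[y^{b/3}, y^{1/2}/\sqrt{2}]$. Listing the distinct primes $p_1 < p_2 < \cdots$ of $n_i$ and forming greedy partial products $D_j = p_1 \cdots p_j$, the ratio $D_{j+1}/D_j = p_{j+1}$. If the sequence ever hits the target interval we are done; otherwise it either caps below $y^{b/3}$ or jumps over, forcing a prime $> y^{1/2 - b/3}/\sqrt{2}$ at the jump step. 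This dichotomy yields a multiplicative bound on $h(n_i) = \prod_{p^\alpha \| n_i}\sigma(p^\alpha)/p^\alpha$ after splitting primes by size around $\sqrt{y/2}$: the large-prime contribution is at most $(1 + \sqrt{2/y})^N$ with $N = 2\log x/\log(y/2)$ (since $n_i \le x$ caps the number of distinct primes $> \sqrt{y/2}$ in $n_i$), while the small-prime contribution is at most $2 - y^{-b}$ via $h(n_i/p) < 2$ for a suitable $p \mid n_i$ arising from the gap. Then \eqref{eq:dagger} forces $h(n_i) < 2$, contradicting nondeficiency.

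For \emph{distinctness and the count}, each $d_i \ge y^{b/3}$ gives $n_i/d_i \le x/y^{b/3}$. To show the $n_i/d_i$ are pairwise distinct, I would suppose $n_i/d_i = n_j/d_j = m$ for distinct pnds $n_i, n_j$. Then $n_i = m d_i$ and $n_j = m d_j$ with $d_i \ne d_j$, and by primitivity of pnds neither $d_i \mid d_j$ nor $d_j \mid d_i$. A prime $p \in d_i \setminus d_j$ either divides $m$---in which case $p^2 \mid n_i$, so $p \mid s(n_i) < y^a$---or satisfies $p \mid n_i$ but $p \nmid n_j$. I would carefully balance these constraints against $d_i, d_j \in [y^{a/2}, y^{1/2}/\sqrt{2}]$ (using $b = 3a/2$) and the pnd conditions $h(n_k) \ge 2$, $h(n_k/p) < 2$ to derive a contradiction, yielding injectivity and hence $m \le x y^{-b/3}$.

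The chief obstacle is the existence step: tightening the greedy dichotomy into the exact inequality \eqref{eq:dagger}. The distinctness step is also subtle because collisions can arise (e.g., the pnds $6$ and $20$ both yield quotient $2$ via $d = 3$ and $d = 10$ respectively), so the specific choice of range and $b = 3a/2$ must play an essential role---likely via a canonical selection of $d_i$, for instance the smallest squarefree divisor of $n_i$ in the range.
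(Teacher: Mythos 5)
Your proposal correctly identifies the architecture of Erd\H{o}s's argument---decompose $n_i$ according to prime sizes, exploit the deficiency of proper divisors, and use a greedy partial-product argument---but it leaves both main steps incomplete, and for the distinctness step it proposes a genuinely different (and unresolved) route that misses the mechanism the paper actually uses.

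On existence: your "greedy dichotomy" is roughly the right shape, but the paper organizes it more tightly. Assuming no square-free divisor of $n$ lies in $[y^{b/3}, y^{1/2}/\sqrt2]$, one observes that (since $s(n)<y^a=y^{2b/3}$) $n$ has \emph{no prime factor at all} in that interval, and so decomposes $n=uv$ with all primes of $u$ below $y^{b/3}$ and all primes of $v$ above $y^{1/2}/\sqrt2$. The inequality in \eqref{eq:dagger} is then used to rule out $u\le y^b$: since $u$ is a \emph{proper} divisor of the pnd $n$, it is deficient, giving $h(u)\le 2-1/u\le 2-y^{-b}$, and combined with $h(v)<(1+\sqrt{2/y})^{2\log x/\log(y/2)}$ one gets $h(n)<2$, a contradiction. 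Hence $u>y^b$, so $q(u)=u/s(n)>y^{b/3}$, and the greedy partial products of the prime factors of $q(u)$ (each $<y^{b/3}$) must land in $[y^{b/3},y^{2b/3}]\subset[y^{b/3},y^{1/2}/\sqrt2]$ since $b<1/2$ and $y>8$. You gesture at these pieces but in a muddled order, and you yourself flag the bookkeeping as unfinished.

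On distinctness: this is where the real gap lies. Your plan---tracking primes $p\in d_i\setminus d_j$ and casing on whether $p\mid m$, then "carefully balancing constraints," possibly with a canonical choice of $d_i$---is not what the paper does and is left unresolved. The paper's mechanism is a clean squeeze on the ratio $h(n_i)/h(n_j)$. Because $s(n)<y^a<y<P(n)$ forces $P(n)\Vert n$, one has $h(n)=(1+1/P(n))\,h(n/P(n))\le 2+2/P(n)<2+2/y$, whence $h(n_i)/h(n_j)<1+1/y$ for all $i,j$. On the other hand, if $n_i/d_i=n_j/d_j$ with $d_i\ne d_j$, multiplicativity gives $h(n_i)/h(n_j)=h(d_i)/h(d_j)$; since $d_i,d_j$ are distinct square-free deficient numbers, $h(d_i)\ne h(d_j)$, and with $\sigma(d_i)<2d_i$ and $d_i,d_j\le y^{1/2}/\sqrt2$ one gets the reverse inequality $h(d_i)/h(d_j)>1+1/(2d_id_j)>1+1/y$. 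This contradiction gives injectivity with no need for a canonical selection. Your example of $6$ and $20$ colliding at quotient $2$ is irrelevant here precisely because of the constraints $P(n)>y$ and $d_i\in[y^{b/3},y^{1/2}/\sqrt2]$, which your sketch does not actually bring to bear. Without this $h$-ratio squeeze, your distinctness argument has no path to a conclusion.
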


\begin{proof}
To prove existence, we proceed by contradiction. Take any $n=n_i$, and suppose there is no divisor $d\mid q(n)$ with $d\in [y^{b/3} , \frac{1}{\sqrt{2}}y^{1/2}]$. Then since $s(n)<y^a=y^{2b/3}$, there are no prime divisors $p\mid n$ in the interval $[y^{b/3} , \frac{1}{\sqrt{2}}y^{1/2}]$. We may therefore decompose $n$ as $n=uv$ where $q< y^{b/3}$ and $p>\frac{1}{\sqrt{2}}y^{1/2}$ for all primes $q\mid u$ and $p\mid v$, respectively. We have that
\begin{align*}
    \omega(v) = \sum_{p\mid v}1 \le \sum_{p\mid v}\frac{\log p}{\log(y/2)/2} & = \frac{2}{\log(y/2)}\log \prod_{p\mid v}p\\
    & = \frac{2\log v}{\log(y/2)} \le \frac{2\log x}{\log(y/2)}.
\end{align*}

Suppose $u\le y^b$, where we recall that $b$ satisfies \eqref{eq:dagger}. Also recall $y<P(n)\le n$ so $u<n$ is a proper divisor of the pnd $n$. Thus $u$ is deficient so $\sigma(u)\le 2u-1$, and since the function $h(n)=\sigma(n)/n$ is multiplicative,
\begin{align*}
    2 \le h(n) = h(u)h(v) &\le \Big(2-\frac{1}{u}\Big)\prod_{p\mid v} \Big(1+\frac{1}{p}\Big)\\
    &\le \Big(2-y^{-b}\Big)\Big(1+\sqrt{2}y^{-1/2}\Big)^{\omega(v)}\\
    &< \Big(2-y^{-b}\Big)\Big(1+\sqrt{2}y^{-1/2}\Big)^{2\log x/\log(y/2)}.
\end{align*}
This contradicts the assumption \eqref{eq:dagger} for $b$. Hence we deduce $u> y^{b}$.

Write the square-free part of $u$ as $q(u)=q_1q_2\cdots q_t$ in ascending order of primes. Since $s(n)< y^{a} < \frac{1}{\sqrt{2}}y^{1/2}$, all the primes of $s(n)$ are less than $\frac{1}{\sqrt{2}}y^{1/2}$. Therefore $s(n)\mid u$ and so $s(n)=s(u)$. Then since $u>y^{b}$ by the preceding paragraph,
$$q(u)=\frac{u}{s(u)} = \frac{u}{s(n)} > y^{b-a}=y^{b/3}.$$
Since $q_i<y^{b/3}$ for each prime $q_i\mid u$, there must exist some index $l\in[1,t]$ such that 
$$q_1\cdots q_{l-1}\le y^{b/3} < q_1\cdots q_l < y^{2b/3}.$$
Since $b<1/2$ and $y>8$, we have $y^{2b/3} < \tfrac{1}{\sqrt 2}y^{1/2}$. Thus $d=q_1\cdots q_l$ is a square-free divisor of $n$ in the interval $[ y^{b/3}, \tfrac{1}{\sqrt 2}y^{1/2}]$. However, this contradicts our assumption. Hence each $n_i$ has a square-free divisor $d_i$ in the interval.

The proof of distinctness is unchanged from \cite{erdos2}, but we provide it for completeness. For all $n=n_i$, since the square-full part $s(n)$ is less than $y^{a}<y<P(n)$ we have that $P(n)^2$ does not divide $n$ so
\begin{align*}
    2 \le h(n) & = h(P(n))h\Big(\frac{n}{P(n)}\Big)\\
    & = \Big(1+\frac{1}{P(n)}\Big)h\Big(\frac{n}{P(n)}\Big) \le 2 + \frac{2}{P(n)} < 2 + 2/y.
\end{align*}
Thus for all $n_i, n_j$ we have
\begin{align} \label{h/h}
    \frac{h(n_i)}{h(n_j)} < \frac{2 + 2/y}{2} = 1+ 1/y.
\end{align}

Suppose $n_i/d_i=n_j/d_j$ for some $i\neq j$. Since $n_i\neq n_j$ we have $d_i\neq d_j$. Then by multiplicativity,
\begin{align*}
    \frac{h(n_i)}{h(d_i)}=h\Big(\frac{n_i}{d_i}\Big)=h\Big(\frac{n_j}{d_j}\Big)=\frac{h(n_j)}{h(d_j)}.
\end{align*}
Since $d_i$ and $d_j$ are distinct square-free numbers, an elementary argument shows $h(d_i)\neq h(d_j)$. Without loss assume $h(d_i) > h(d_j)$. Then
$$1 < \frac{h(d_i)}{h(d_j)} = \frac{\sigma(d_i)d_j}{\sigma(d_j)d_i}$$
so that $\sigma(d_i)d_j \ge \sigma(d_j)d_i + 1$. Since $d_i$ divides the pnd $n_i$, $d_i$ is deficient so $\sigma(d_i)<2d_i$. And since $d_i\le \frac{1}{\sqrt2}y^{1/2}$ we deduce
\begin{align*}
    \frac{h(n_i)}{h(n_j)} = \frac{h(d_i)}{h(d_j)} = \frac{\sigma(d_i)d_j}{\sigma(d_j)d_i} &\ge 1 + \frac{1}{\sigma(d_j)d_i}\\
    &> 1 + \frac{1}{2d_id_j} > 1+ 1/y.
\end{align*}
 contradicting \eqref{h/h}. Hence each $n_i/d_i$ must be distinct.
\end{proof}

Combining \eqref{eq:sqfull} and Lemma \ref{lmErdos} gives our desired bound on $M(x,y)$. As before, let $\lambda = \zeta(3/2)/2\zeta(3)$.
\begin{theorem}\label{thm:M(x)}
Assume $x>y>8$. Let $b=b(x,y)$ be defined by
\begin{align}\label{dagger}
y^{-b} = 2-2\Big(1+\sqrt{2/y}\Big)^{-2\log x/\log(y/2)}.
\end{align}
Then so long as $0 < b <\tfrac{1}{2}$, we have the upper bound
\begin{align}\label{eqM}
     M(x,y) = \sideset{}{'}\sum_{\substack{n\le x\\ P(n)> y}}1 & \le (\lambda+1)xy^{-b/3} + 2xy^{-4b/9}.
\end{align}
\end{theorem}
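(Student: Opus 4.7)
The plan is to combine the two estimates set up just before the theorem statement. The paper has already decomposed $M(x,y)$ by size of the square-full part $s(n)$ relative to a threshold $y^a$: estimate \eqref{eq:sqfull} controls the contribution from $s(n)\ge y^a$ by $\lambda x y^{-a/2}+2xy^{-2a/3}$, while Lemma \ref{lmErdos} controls the contribution from $s(n)<y^a$ by $xy^{-b/3}$ with $b=3a/2$, provided the technical condition \eqref{eq:dagger} holds.

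First I would set $a=2b/3$ so that $a/2=b/3$ and $2a/3=4b/9$, matching the exponents in the target bound. The hypothesis $b\in(0,\tfrac12)$ then translates to $a\in(0,\tfrac13)$, which is precisely the range in which \eqref{eq:sqfull} was derived from Lemma \ref{lmsqfull}.

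Next I would check that the $b$ defined by \eqref{dagger} automatically saturates condition \eqref{eq:dagger}. Rearranging \eqref{dagger} gives
$$2 - y^{-b} \;=\; 2\bigl(1+\sqrt{2/y}\bigr)^{-2\log x/\log(y/2)},$$
so $\bigl(2-y^{-b}\bigr)\bigl(1+\sqrt{2/y}\bigr)^{2\log x/\log(y/2)} = 2$, which is the boundary case of the second part of \eqref{eq:dagger}. Combined with the standing assumption $0<b<\tfrac12$, every hypothesis of Lemma \ref{lmErdos} is met.

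Finally, summing the two contributions yields
$$M(x,y) \;\le\; \bigl(\lambda xy^{-b/3}+2xy^{-4b/9}\bigr) + xy^{-b/3} \;=\; (\lambda+1)xy^{-b/3}+2xy^{-4b/9},$$
which is the claimed bound. No genuine obstacle remains at this stage: the real work has already been done in Lemmas \ref{lmsqfull} and \ref{lmErdos}, and the theorem amounts to the bookkeeping choice of $a$ (equivalently $b$) that aligns the exponents and pushes condition \eqref{eq:dagger} to equality, thereby optimizing the split between the square-full-dominated and square-free-dominated regimes.
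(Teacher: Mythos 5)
Your proposal is correct and follows essentially the same route as the paper: decompose by the size of $s(n)$ relative to $y^a$, apply \eqref{eq:sqfull} and Lemma \ref{lmErdos}, observe that the definition \eqref{dagger} makes condition \eqref{eq:dagger} hold with equality, and choose $a=2b/3$ to align the exponents. The paper's proof is terser but identical in substance; you have merely spelled out the bookkeeping it leaves implicit.
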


\begin{proof}
The definition of $b$ is constructed to satisfy \eqref{eq:dagger}.
By \eqref{eq:sqfull} and Lemma \ref{lmErdos} we have
\begin{align*}
M(x,y) & = \sideset{}{'}\sum_{\substack{n\le x\\P(n)>y\\s(n)>y^a}}1 + \sideset{}{'}\sum_{\substack{n\le x\\P(n)>y\\s(n)\le y^a}}1\\
& \le \lambda xy^{-a/2}+ 2 xy^{-2a/3} + xy^{-b/3}.
\end{align*}
The result then follows from $a=2b/3$.
\end{proof}

The utility of Theorem \ref{thm:M(x)} comes to us as the following Corollary.

\begin{corollary}\label{cor:M(x)}
With $b$ as in Theorem \ref{thm:M(x)}, for $y>8$ we have that
\begin{align*}
\sideset{}{'}\sum_{\substack{x_1\le n\le x_2\\P(n)> y}}\frac{1}{n} \le (1+\log(x_2/x_1))[(\lambda+1)y^{-b/3} + 2y^{-4b/9}].
\end{align*}
\end{corollary}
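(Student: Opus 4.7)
The plan is to derive the corollary from Theorem \ref{thm:M(x)} via partial summation, with the one subtle point being that $b$ in the theorem depends on $x$, so monotonicity has to be used to pull a single constant outside the integral.

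First I would denote the left-hand side by $S$ and apply Abel's summation identity to the sequence $a_n = \mathbf{1}[n \text{ is a pnd},\, P(n)>y]$, whose partial sums are $M(t,y)$. Writing $T(t) = M(t,y) - M(x_1^-,y)$ so that $T(x_1^-) = 0$, this yields
\[
S \;=\; \frac{T(x_2)}{x_2} + \int_{x_1}^{x_2} \frac{T(t)}{t^2}\,dt
\;\le\; \frac{M(x_2,y)}{x_2} + \int_{x_1}^{x_2} \frac{M(t,y)}{t^2}\,dt.
\]

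Next comes the key observation. The quantity $b=b(x,y)$ defined implicitly by \eqref{dagger} is a decreasing function of $x$ for fixed $y$: as $x$ grows, the exponent $-2\log x/\log(y/2)$ becomes more negative, so $(1+\sqrt{2/y})^{-2\log x/\log(y/2)}$ shrinks, hence $y^{-b} = 2 - 2(1+\sqrt{2/y})^{-2\log x/\log(y/2)}$ grows, forcing $b$ to decrease. Consequently for every $t \le x_2$ we have $b(t,y) \ge b(x_2,y) =: b$, and since $y>1$ this gives the uniform estimate
\[
\frac{M(t,y)}{t} \;\le\; (\lambda+1)\,y^{-b(t,y)/3} + 2\,y^{-4b(t,y)/9} \;\le\; (\lambda+1)\,y^{-b/3} + 2\,y^{-4b/9} \;=:\; C
\]
on all of $[x_1,x_2]$, by Theorem \ref{thm:M(x)} applied at $t$.

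Plugging this back into the partial summation bound,
\[
S \;\le\; C + \int_{x_1}^{x_2} \frac{C}{t}\,dt \;=\; C\bigl(1 + \log(x_2/x_1)\bigr),
\]
which is exactly the claimed inequality. The only conceptual obstacle is recognizing the $x$-dependence of $b$ and using monotonicity to freeze it at the upper endpoint; the rest is routine partial summation.
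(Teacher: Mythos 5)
Your proof is correct and follows the same partial-summation route as the paper; indeed it is more careful, since you explicitly address the $x$-dependence of $b(x,y)$ via monotonicity in $x$, a point the paper's proof glosses over by simply declaring $C$ ``constant with respect to $x$.'' One small refinement: applying Theorem~\ref{thm:M(x)} at each intermediate $t$ tacitly needs $b(t,y)<\tfrac12$, which monotonicity alone does not guarantee for $t<x_2$; this is cleanly sidestepped by instead invoking Lemma~\ref{lmErdos} directly with the frozen value $b=b(x_2,y)$, which satisfies condition~\eqref{eq:dagger} (with $x$ replaced by $t$) for every $t\le x_2$ precisely because it does so at $t=x_2$.
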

\begin{proof}
Since $C = (\lambda+1)y^{-b/3} + 2 y^{-4b/9}$ is constant with respect to $x$, by partial summation and Theorem \ref{thm:M(x)},
\begin{align*}
\sideset{}{'}\sum_{\substack{x_1\le n\le x_2\\ P(n)> y}}\frac{1}{n} &= \frac{M(x_2,y)}{x_2} -  \frac{M(x_1,y)}{x_1} + \int^{x_2}_{x_1} M(x,y)\frac{dx}{x^2}\\
& \le C + C \int^{x_2}_{x_1} \frac{dx}{x} = (1+\log(x_2/x_1))C.
\end{align*}
\end{proof}

\section{Bounding the tail}

Recall the contribution of pnds less than $10^{14}$ was computed directly. On the other end, we may bound the tail of the reciprocal sum of pnds greater than $e^{5000}$. In this range, we bifurcate based on the relative size of $\omega(n)$ compared to $4\log_2(n)$. Note that if $\omega(n) \le 4\log_2(n)$, then there exists a prime power $q^a\mid n$ such that 
$$q^a \ge n^{1/4\log_2(n)}=\exp\Big({\frac{\log n}{4\log_2 n}}\Big)=:y(n).$$
If $a=1$, then $P(n)$ is large, and if $a\ge2$ then $s(n)$ is large. Thus it suffices to consider the following cases:
\begin{enumerate}
\item[(i)] $\omega(n) > 4\log_2(n)$,
\item[(ii)] $P(n) \ge y(n)$,
\item[(iii)] $s(n) \ge y(n)$.
\end{enumerate}

In case (i), by Proposition 3.2 in \cite{NP},
\begin{align}
\sum_{\substack{n>e^{5000}\\ \omega(n) >4\log_2 n}}\frac1{n} &\le \frac1{24}\sum_{k\ge 5000} \frac{(k+5)^4}{k^{4\log 4}} \le \frac{1}{24}\int_{5000}^\infty \frac{(t+5)^4}{t^{4\log 4}}\;dt \le  7.37\cdot10^{-4}. \label{eq:i}
\end{align}

In case (ii), let $y=y_k = y(e^k) = \exp\big(\frac{k}{4\log k}\big)$ and define $b=b_k=b(e^{k+1},y_k)$ from Theorem \ref{thm:M(x)}. A calculation shows that $b_k\in \big(0,\tfrac{1}{2}\big)$ for $k\ge191$. Then by Corollary \ref{cor:M(x)}
\begin{align*}
\sum_{\substack{n>e^{5000}\\ P(n) > y(n)}}\frac1{n} & \le \sum_{k\ge 5000}\sum_{\substack{e^k<n\le e^{k+1}\\ P(n)> y_k}}\frac{1}{n} \le 2\sum_{k\ge 5000}[(\lambda+1)y^{-b/3} + 2y^{-4b/9}].
\end{align*}
We may compute this sum directly up to, say, $10^4$, which contributes at most $6.8\cdot10^{-8}$. We bound the remaining series by the integral,
\begin{align*}
\sum_{k\ge 10^4}(\lambda+1)y^{-b/3} + 2y^{-4b/9} \le 2.1\int_{10^4}^\infty\exp\Big(\frac{-bt}{12\log t}\Big)\;dt.
\end{align*}
Note that the definition of $b=b_k$ in $\eqref{dagger}$ using $x=e^{k+1}$ and $y=y_k$ ensures $b\ge 0.46$ for $k\ge 10^4$. Then since $b\ge 0.46$ and $k\ge 10^4$, we have $.6\log k \ge \log(12/b) + \log_2 k$ which implies
\begin{align}\label{eq:bk}
-t^{0.4} &\ge \frac{-t}{(12/b)\log t}.
\end{align}
Hence we have
\begin{align}
\sum_{\substack{n>e^{5000}\\ P(n) > y(n)}}\frac1{n} & \le 2(6.8\cdot10^{-8}) + 4.2\int_{10^4}^\infty\exp( -t^{0.4})\;dt \nonumber\\
& \le 2(6.8\cdot10^{-8}) + 10^{-14} \le 1.4\cdot10^{-7}.\label{eq:ii}
\end{align}

In case (iii), by partial summation and Lemma \ref{lmsqfull},
\begin{align}
\sum_{\substack{n>e^{5000}\\ s(n) > y(n)}}\frac1{n} & \le \sum_{k \ge 5000}\sum_{\substack{e^{k}\le n \le e^{k+1}\\ s(n) > y_k}}\frac1{n} \nonumber\\
& \le \sum_{k \ge 5000}\Big(\frac{B(e^{k+1},y_k)}{e^{k+1}} - \frac{B(e^{k},y_k)}{e^{k}} + \int_{e^k}^{e^{k+1}} B(t,y_k)\frac{dt}{t^2}\Big) \nonumber\\
& \le \sum_{k \ge 5000}(\lambda y_k^{-1/2} + 2y_k^{-2/3})\Big(1+ \int_{e^k}^{e^{k+1}} \frac{dt}{t}\Big) \le 2\sum_{k \ge 5000}(\lambda y_k^{-1/2} + 2y_k^{-2/3})  \nonumber\\
& \le 2.1\int_{5000}^\infty \exp\Big(\frac{-t}{8\log t}\Big)\;dt \le 2.1\int_{5000}^\infty \exp(-3 t^{.4})\;dt \le 6\cdot10^{-17} \label{eq:iii}
\end{align}
by a bound analogous to \eqref{eq:bk}. Hence, combining cases (i)-(iii) in equations \eqref{eq:i}-\eqref{eq:iii}, the tail is bounded by
\begin{align}
\sideset{}{'}\sum_{n>e^{5000}}\frac1{n} & \le 7.4\cdot10^{-4}.\label{eq:tail}
\end{align}

We remark that the above approach used to bound the tail from $e^{5000}$ is inadequate to use all the way from $10^{14}$. Indeed, the non-smooth contribution alone from $10^{14}$ onward is
\begin{align*}
\sideset{}{'}\sum_{\substack{n\ge 10^{14}\\ P(n)>y(n)}}\frac{1}{n}\le \sum_{k\ge 14\log10}(\lambda+1)y^{-b/3} + 2y^{-4b/9} \le 2300,
\end{align*}
which gives a very large bound (though still on par with the initial literature on the amicable numbers reciprocal sum). A bound of such magnitude reflects that it is insufficient to simply use pnd distribution estimates from \cite{erdos2} (in explicit form). Thus more refined approach is needed in order to obtain good bounds on the reciprocal sum.

We are left to deal with the contribution of pnds lying in the intermediate range $[10^{14}, e^{5000}]$. We further split up the range at $e^{700}$, and deal with the upper subrange $[e^{700}, e^{5000}]$ in the following section.

\subsection{Intermediate Range}

In the range $[e^{700}, e^{5000}]$, we implement our bounds in Corollary \ref{cor:M(x)} with greater care paid to our choice of smoothness cutoff $y$. We first estimate the reciprocal sum of smooth numbers.

\begin{lemma}\label{lma:rank}
Let $x>y\ge2$, $u=\log x/\log y$. For $u\ge10$ and $s \in [0,.041]$, we have
\begin{align*}
\sum_{\substack{n>x\\ P(n)\le y}}\frac1{n} & \le x^{-s}\exp\Big((1+\epsilon)(\mathrm{Li}(y^s) - \mathrm{Li}(2^s) + 2^s/\log2) + 0.35\Big)
\end{align*}
where $\epsilon = 2.3\times10^{-8}$.
\end{lemma}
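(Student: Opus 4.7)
The plan is to apply a Rankin-style bound. For any $s\in(0,1)$, since $n>x$ implies $(n/x)^s>1$,
\begin{align*}
\sum_{\substack{n>x\\P(n)\le y}}\frac1n \;\le\; x^{-s}\sum_{P(n)\le y}n^{s-1} \;=\; x^{-s}\prod_{p\le y}(1-p^{s-1})^{-1}
\end{align*}
by the Euler product over $y$-smooth numbers. Taking the logarithm and expanding $-\log(1-z) = \sum_{k\ge 1}z^k/k$, the problem reduces to bounding
$$S_1 := \sum_{p\le y}p^{s-1} \quad\textrm{and}\quad S_2 := \sum_{p\le y}\sum_{k\ge 2}\frac{p^{k(s-1)}}{k},$$
where the target is $S_1 \le (1+\epsilon)(\Li(y^s)-\Li(2^s)+2^s/\log 2)$ and $S_2 \le 0.35$.

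For $S_1$, the idea is partial summation against the Chebyshev function $\theta(t) = \sum_{p\le t}\log p$. Writing $S_1 = \int_{2^-}^{y} F(t)\,d\theta(t)$ with $F(t) = t^{s-1}/\log t$, one integration by parts gives $\theta(y)F(y) - \int_2^y\theta(t)F'(t)\,dt$; note $F'(t)<0$ for $s<1$ and $t\ge 2$. Substituting the explicit Chebyshev bound $\theta(t)\le (1+\epsilon)t$---with $\epsilon = 2.3\times 10^{-8}$ drawn from an explicit Dusart/Platt--Trudgian-type estimate, valid for $t$ above some threshold that the hypothesis $u\ge 10$ pushes $y$ past---and then integrating by parts a second time via $\int_2^y tF'(t)\,dt = yF(y) - 2F(2) - \int_2^y F(t)\,dt$ telescopes the $yF(y)$ boundary terms and yields
$$S_1 \;\le\; (1+\epsilon)\Big[\frac{2^s}{\log 2} + \int_2^y\frac{t^{s-1}}{\log t}\,dt\Big] \;=\; (1+\epsilon)\Big[\frac{2^s}{\log 2} + \Li(y^s) - \Li(2^s)\Big],$$
the integral identity following from the substitution $u = t^s$.

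For $S_2$, each inner sum equals $-\log(1-p^{s-1}) - p^{s-1}$. Since $s\le 0.041$ forces $p^{s-1}\le p^{-0.959}$, the contributions decay rapidly in $p$; the largest come from $p=2,3,5,\ldots$, and a direct numerical estimate (or the crude bound $-\log(1-z)-z \le z^2/(2(1-z))$) summed over all primes gives $S_2 \le 0.35$. Feeding $S_1$ and $S_2$ back into the exponential and multiplying by $x^{-s}$ completes the argument.

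The main obstacle will be the explicit Chebyshev input. One must ensure that $\theta(t)\le (1+\epsilon)t$ with the very small $\epsilon = 2.3\times 10^{-8}$ holds uniformly on the integration range, and in particular handle the edge near $t=2$ where this sharp estimate fails; the natural fix is to split off primes below a suitable explicit threshold $T_0$ and absorb their contribution either into the $2^s/\log 2$ endpoint correction or into the constant $0.35$. The hypothesis $u\ge 10$ is what keeps $y$ safely in the regime where the explicit prime bounds apply with the stated $\epsilon$.
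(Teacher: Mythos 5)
Your proposal is correct and follows essentially the same route as the paper: the Rankin trick to pass to an Euler product, splitting $-\log\prod_{p\le y}(1-p^{s-1})$ into $\sum_{p\le y}p^{s-1}$ plus a remainder bounded numerically by $0.35$, and estimating the main sum by partial summation against $\vartheta$ with an explicit Chebyshev bound and a double integration by parts to produce $\mathrm{Li}(y^s)-\mathrm{Li}(2^s)+2^s/\log 2$. The "main obstacle" you flag is not actually one: the paper invokes B\"uthe's estimate $\vartheta(t) < (1+\epsilon)t$ with $\epsilon = 2.3\times 10^{-8}$, which holds for \emph{all} $t>0$ (the constraint from partial RH verification only bites for lower bounds on $\vartheta$), so no threshold $T_0$ or edge correction near $t=2$ is needed, and the hypothesis $u\ge 10$ plays no role in this lemma's proof.
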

\begin{proof}
We have
\begin{align*}
    \sum_{\substack{n>x\\ P(n)\le y}}\frac1{n} & \le x^{-s}\sum_{\substack{n>x\\ P(n)\le y}}n^{s-1} = x^{-s}\prod_{p\le y}\Big(1 - p^{s-1}\Big)^{-1}\\
    & = x^{-s}\exp\Big(-\sum_{p\le y}\log\Big(1+\frac{1}{p^{1-s}-1} \Big) \le  x^{-s}\exp\Big(\sum_{p\le y}p^{s-1} + 0.35\Big)
\end{align*}
since one verifies that for $s \le .041$,
\begin{align*}
\sum_p \log(1 - p^{s-1}) - \frac{1}{p^{1-s}} \le 0.35.
\end{align*}

Let $f(t)=t^{s-1}/\log t$ and $\epsilon = 2.3\times10^{-8}$. From \cite{buthe}, we have that
\begin{align}
    \vartheta(x) = \sum_{p\le x}\log p < (1+\epsilon)x \quad \textrm{ for }x>0,
\end{align}
The result then follows from partial summation and integration by parts,
\begin{align*}
    \sum_{p\le y}p^{s-1} &= \sum_{p\le y}f(p)\log p = \vartheta(y)f(y) - \int_2^y \vartheta(t)f'(t)\;dt\\
    & \le (1+\epsilon)yf(y) - (1+\epsilon)\int_2^y tf'(t)\;dt = (1+\epsilon)2f(2) + (1+\epsilon)\int_2^y f(t)\;dt\\
    & = (1+\epsilon)(\mathrm{Li}(y^s) - \mathrm{Li}(2^s) + 2^s/\log2)
\end{align*}
using $f'(t)<0$ for $t\ge2$. This argument is adapted from \cite{NP}.
\end{proof}

We apply this bound to with $x=e^k$ for each $k\in [700,5000]$. Given $k$, we find a reasonable value of $u=u_k$, which determines $y = y_k = k/u$. Note $y_k$ here differs from the previous section. Thus by Corollary \ref{cor:M(x)} and Lemma \ref{lma:rank},
\begin{align*}
\sideset{}{'}\sum_{e^{700}\le n \le e^{5000}}\frac1{n} & \le \sum_{700\le k \le 5000}\Big(\sideset{}{'}\sum_{\substack{e^{k}\le n\le e^{k+1}\\ P(n)> y}}\frac1{n} + \sum_{\substack{n > e^k\\ P(n)\le y}}\frac1{n}\Big)\\
& \le \sum_{700\le k \le 5000}\Big(2[(\lambda+1)y^{-b/3} + 2 y^{-4b/9}] \ + \\
& \qquad\qquad\qquad \exp\Big((1+\epsilon)(\mathrm{Li}(y^s) - \mathrm{Li}(2^s) + 2^s/\log2) + .38 - sk\Big)\Big).
\end{align*}
For each $k$ we choose an optimal value of $u$, which we found to be $u = .006k+4$. We also use $s=s_k=\log(e^\gamma u\log u)/\log y$, for which one verifies $s_k\le .041$ at each $k$. With such choices of $u$ and $s$, we compute
\begin{align}
\sideset{}{'}\sum_{e^{700}\le n \le e^{5000}}\frac1{n} \le 0.001260 + 0.002237
=  0.00350.\label{eq:intermediate}
\end{align}

The remaining range $10^{14}\le n\le e^{700}$ is too small for the pnd methods to be effective. Indeed, the above method starting from $10^{14}$ gives a bound of $330$, which is orders of magnitude larger. In the next section we take a different approach, inspired by abundant density estimates.

\section{Abundant density estimates}
The density of the abundant numbers, denoted $\Delta$, is tightly bounded by
\begin{align}\label{eq:delta}
0.2476171 < \Delta < 0.2476475,
\end{align}
as proved in \cite{mits1}. Since every abundant number has a unique smallest pnd divisor, one may express the density of abundant numbers as a series over pnds, $\Delta=\sum'_{n}a_n$. Here $a_n$ is the density of all abundant numbers with smallest pnd divisor $n$. The density $a_n$ may be expressed in full via inclusion-exclusion, see \cite{mits2}, however to calculate $a_n$ this way quickly becomes unreasonable.

Towards an alternate series representation of the abundant density $\Delta$, define the {\it significance} of a number $n$ to be $\textrm{sig}(n):=\max\{\sigma(p^e):{p^e\| n}\}$. The ordering on the natural numbers by significance was first introduced in \cite{mits1}. Then letting $b_n$ be the density of abundant numbers with smallest (by significance) pnd divisor $n$, we have the series representation $\Delta=\sum'_{n}b_n$. We note that for any $x>0$, the partial sum $\sum_{n\le x}' a_n$ dominates $\sum_{n\le x}' b_n$ since there may be some pnd $n_1>x$ with significance less than that of another pnd $n_2\le x$, in which case the density of multiples of $\textrm{lcm}(n_1,n_2)$ is counted in the $a_n$ sum but not in the $b_n$ sum.\footnote{ Conversely, $\sum_{\textrm{sig}(n)\le x}' b_n$ dominates $\sum_{\textrm{sig}(n)\le x}' a_n$.}

Remarkably, when ordered by significance, the density $b_n$ factors as a product over a certain set of primes depending on $n$, see \cite{mits2}, thus enabling rapid computation. In particular, Kobayashi computed up to $4\cdot10^{10}$, giving the following.
\begin{theorem}[Theorem 3 in \cite{mits2}]
We have $\sum_{n\le 4\cdot10^{10}}' b_n = 0.24760444\ldots$. Thus the density of abundant numbers with a pnd divisor below $4\cdot10^{10}$ is at least
\begin{align}\label{eq:41010}
\sideset{}{'}\sum_{n\le 4\cdot10^{10}} a_n \ge 0.24760444.
\end{align}
\end{theorem}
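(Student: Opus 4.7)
The theorem splits cleanly into two parts. The first is a numerical statement that I would simply quote from Theorem 3 of Kobayashi \cite{mits2}; that computation relies on the factorization of the significance-ordered densities $b_n$ as products over certain primes, which permits rapid evaluation of the partial sum up to $4\cdot 10^{10}$. The substantive step for us is the inequality relating the $a_n$ and $b_n$ partial sums, which I plan to establish in the general form
\begin{equation*}
\sideset{}{'}\sum_{n\le x} a_n \;\ge\; \sideset{}{'}\sum_{n\le x} b_n
\end{equation*}
for every $x>0$, and then specialize to $x=4\cdot 10^{10}$ and invoke the Kobayashi value to conclude.

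For the general inequality, let $A_{\le x}$ be the set of abundant integers possessing at least one pnd divisor $\le x$; equivalently, the natural-smallest pnd divisor of such an integer is $\le x$. Let $B_{\le x}$ be the set of abundants whose \emph{significance-smallest} pnd divisor is $\le x$ as an integer. The key observation is the pointwise inclusion $B_{\le x}\subseteq A_{\le x}$: if $m\in B_{\le x}$ and $n$ is the significance-smallest pnd divisor of $m$, then by hypothesis $n\le x$, so $n$ is in particular a pnd divisor of $m$ not exceeding $x$, forcing $m\in A_{\le x}$. Writing each side as a disjoint union indexed by the corresponding "smallest" pnd divisor, the densities add, and the set inclusion promotes to the density inequality displayed above.

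I do not expect any serious obstacle, as the argument is already sketched in the paragraph immediately preceding the theorem. The only point deserving a line of care is the existence of $a_n$ and $b_n$ and the legitimacy of summing them term by term: each is a density of a set defined by finitely many divisibility conditions, obtainable via inclusion-exclusion on lcm's of pnds, and the convergence of the full series to $\Delta$ is recorded in \cite{mits2}. Once these are in hand, the rest is a one-line containment plus quotation of Kobayashi's computation.
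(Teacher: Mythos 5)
Your proposal is correct and follows essentially the same approach the paper sketches in the paragraph preceding the theorem: quote Kobayashi's computation for the $b_n$ partial sum and combine it with the domination $\sum_{n\le x}' a_n \ge \sum_{n\le x}' b_n$. Your containment argument $B_{\le x}\subseteq A_{\le x}$ is a slightly cleaner and more rigorous rendering of the paper's informal explanation (which instead exhibits the slack by pointing to multiples of $\mathrm{lcm}(n_1,n_2)$ with $n_1>x$, $n_2\le x$, and $\mathrm{sig}(n_1)<\mathrm{sig}(n_2)$), but the underlying reasoning is the same.
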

Note that $0.24760444$ is a lower bound for $\Delta$, though the better bound $0.2476171$ was obtained in \cite{mits1} by a different method. Before proceeding, we prove a lemma.
\begin{lemma}\label{lma:P(n)}
The largest prime factor of a pnd $n$ is at most $\sqrt{2n}$.
\end{lemma}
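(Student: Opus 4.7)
The plan is a short case analysis on the exponent of $p=P(n)$ in $n$. Write $n=p^a m$ with $(p,m)=1$ and $a\ge 1$. The desired bound $p\le \sqrt{2n}$ is equivalent to $p^{2-a}\le 2m$, so for $a\ge 2$ it is immediate ($p^{2-a}\le 1\le 2m$). Hence the only case requiring work is $a=1$, where $n=pm$ and we must show $p\le 2m$.

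For $a=1$, the key input is primitivity: $m=n/p$ is a proper divisor of the pnd $n$, so $m$ is deficient, i.e.\ $\sigma(m)\le 2m-1$. Assume toward a contradiction that $p>2m$. Since $\sigma$ is multiplicative,
\begin{equation*}
2 \;\le\; \frac{\sigma(n)}{n} \;=\; \Bigl(1+\tfrac{1}{p}\Bigr)\frac{\sigma(m)}{m},
\end{equation*}
which rearranges to $\sigma(m)\ge 2m - \tfrac{2m}{p+1}$. The hypothesis $p>2m$ forces $\tfrac{2m}{p+1}<1$, so $\sigma(m)>2m-1$. As $\sigma(m)$ is an integer, this yields $\sigma(m)\ge 2m$, contradicting the deficiency of $m$. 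Therefore $p\le 2m$, i.e.\ $p^2\le 2pm=2n$.

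There is no real obstacle; the only subtlety is recognizing that the $a\ge 2$ case is handled trivially by $p\le \sqrt{n}$, so primitivity only needs to be invoked in the squarefree-at-$p$ case, where the integrality of $\sigma(m)$ together with the strict deficiency inequality $\sigma(m)\le 2m-1$ is exactly strong enough to close the gap.
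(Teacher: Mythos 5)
Your proof is correct and takes essentially the same approach as the paper: split on the exponent $e$ of $p=P(n)$, noting the bound is trivial when $e\ge 2$, and when $e=1$ combine $\sigma(n)\ge 2n$ with the deficiency bound $\sigma(n/p)\le 2(n/p)-1$. The only cosmetic difference is that you argue by contradiction in the $e=1$ case, while the paper rearranges the inequality directly to get $p+1\le 2n/p$.
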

\begin{proof}
Let $P(n)=p$ and suppose $p^e\| n$. If $e\ge2$, then $p^e \le n$ easily gives $p\le \sqrt{2n}$. If $e=1$, then
\begin{align*}
2n \le\sigma(n)  = \sigma(p)\sigma(n/p) & \le (p+1)(2n/p-1)
\end{align*}
so $p + 1 \le 2n/p$, and thus $p\le \sqrt{2n}$.
\end{proof}

Though $b_n$ is easier to compute directly, with $a_n$ we have the useful inequality
\begin{align}\label{eq:pnddensity}
    a_n \ge \frac1{n}\prod_{p\le \sqrt{2n}}(1-1/p)\qquad\textrm{for a pnd }n.
\end{align}
Indeed, consider any number $m>1$ whose prime factors are all greater than $\sqrt{2n}$. The density of such numbers $m$ is $\prod_{p\le \sqrt{2n}}(1-1/p)$. By Lemma \ref{lma:P(n)}, $m$ is coprime to all pnds up to $n$ and so $mn$ is an abundant number with smallest pnd divisor $n$. Thus all such numbers $mn$ contribute to the density $a_n$, and so \eqref{eq:pnddensity} follows.

Now we recall Theorem 8 in \cite{RS1}, which states
\begin{align*}
\prod_{p \le x}\big(1-1/p\big) \ge \frac{e^{-\gamma}}{\log x + 1/2\log^3 x}\qquad\textrm{for }x\ge 286,
\end{align*}
and so by \eqref{eq:pnddensity} we may extract a portion of the pnd reciprocal sum,
\begin{align}
\sideset{}{'}\sum_{4\cdot10^{10}< n \le x} a_n & \ge \sideset{}{'}\sum_{4\cdot10^{10}< n \le x}\frac1{n}\prod_{p\le \sqrt{2n}}(1-1/p) \ge \sideset{}{'}\sum_{4\cdot10^{10}< n \le x} \frac{1}{n}\frac{e^{-\gamma}}{\log(2n)/2 + 4/\log^3(2n)}\nonumber\\
 & \ge \frac{e^{-\gamma}}{\log(2x)/2 + 4/\log^3(2x)}\sideset{}{'}\sum_{4\cdot10^{10} < n \le x} \frac{1}{n} \qquad\textrm{for }x\ge 286^2/2.\label{eq:dense2pnd}
\end{align}
Combining \eqref{eq:delta}, \eqref{eq:41010}, \eqref{eq:dense2pnd} yields
\begin{align*}
0.00004306 & = 0.2476475 - 0.24760444  \ge \Delta - \sideset{}{'}\sum_{n \le 4\cdot10^{10}} a_n \ge \sideset{}{'}\sum_{4\cdot10^{10}< n \le x} a_n\\
& \ge \frac{e^{-\gamma}}{\log(2x)/2 + 4/\log^3(2x)}\sideset{}{'}\sum_{4\cdot10^{10} < n \le x} \frac{1}{n},
\end{align*}
so that
\begin{align}
    \sideset{}{'}\sum_{4\cdot10^{10} < n \le x} \frac{1}{n} & \le 0.00004306\cdot e^{\gamma}(\log(2x)/2 + 4/\log^3(2x))\nonumber\\
    & \le 3.835\cdot10^{-5} \log(2x). \label{eq:abundant}
\end{align}

Finally, applying \eqref{eq:abundant} with $x=e^{700}$ leads to our desired bound.\newline
{\bf Theorem \ref{thm:final}.}
{\it The Erd\H{o}s constant $\sum_n'1/n$ lies in the interval}
\begin{align*}
    (0.34842 \ , \ 0.37937).
\end{align*}
\begin{proof}
The lower bound was computed directly up to $10^{14}$, as in \eqref{Silva}. For the upper bound, by computing up to $4\cdot10^{10}$ and combining \eqref{eq:abundant} with the results in \eqref{eq:tail} and \eqref{eq:intermediate},
\begin{align*}
\sideset{}{'}\sum_{n}\frac{1}{n} & =  \sideset{}{'}\sum_{n \le 4\cdot10^{10}}\frac{1}{n} + \sideset{}{'}\sum_{4\cdot10^{10}< n \le e^{700}}\frac{1}{n} + \sideset{}{'}\sum_{e^{700}< n \le e^{5000}}\frac{1}{n} + \sideset{}{'}\sum_{n >e^{5000}}\frac{1}{n} \\
&\le 0.348255 + 0.026872 + 0.00350 + 0.00074 \ \le \ 0.37937.
\end{align*}
\end{proof}

\section*{Acknowledgments}
I would like to thank Carl Pomerance for his nondeficient supply of ideas, encouragement, and attention to detail. I thank Mits Kobayashi for sharing his computations, as well as his helpful conversations early on. I also thank Tom\'as Silva, who recently extended the reciprocal sum calculation out to $10^{14}$. The author was partially supported by a Byrne Scholarship at Dartmouth. 

\bibliographystyle{amsplain}

\end{document}